\newtheorem{theorem}{Theorem}
\newtheorem{lemma}{Lemma}
\newcommand{\ia}{\mathfrak{a}}
\newtheorem{prop}{Proposition}
\newcommand{\re}{\mathrm{Re}}
\newcommand{\fa}{\mathfrak{a}}
\newcounter{remark}
\newenvironment{remark}[1][Remark \theremark]{\refstepcounter{remark} \begin{trivlist} \item[\hskip 
\labelsep{\bfseries #1.}]\setlength{\parindent}{0pt}}{\hfill\end{trivlist}}
\providecommand{\keywords}[1]
{
  \small	
  \textbf{Keywords:} #1
}
\providecommand{\class}[1]
{
  \small	
  \textbf{Subject Classification Codes:} #1
}
\title{An explicit upper bound for Siegel zeros of imaginary quadratic fields}
\author{D.~Ralaivaosaona  
   \and F.~B.~Razakarinoro}
   \newcommand{\Addresses}{{
  \bigskip
  \footnotesize

  D.~Ralaivaosaona, \textsc{Department of Mathematical Sciences, Stellenbosch University, South Africa}\par\nopagebreak
  \textit{E-mail address}, D.~Ralaivaosaona: \texttt{naina@sun.ac.za}

  \medskip

  F.~B.~Razakarinoro, \textsc{Department of Mathematical Sciences, Stellenbosch University, South Africa}\par\nopagebreak
  \textit{E-mail address}, F.~B.~Razakarinoro: \texttt{brice@aims.ac.za}

}}
\date{}
\begin{document}
\maketitle
\begin{abstract}
For any integer $d\geq 3$ such that $-d$ is a fundamental discriminant, we show that the Dirichlet $L$-function associated with the real primitive character $\chi(\cdot)=(\frac{-d}{\cdot})$ does not vanish on the positive part of the interval  
$[1-6.5/\sqrt{d},\ 1].
$
\end{abstract}

\keywords{Siegel zero, imaginary quadratic fields, class number, $L$-function}

\class{11M20}
\section{Introduction}
For a fundamental discriminant $D$, the arithmetic function defined by the Kronecker symbol $\chi(n)=(\frac{D}{n})$ is a real primitive Dirichlet character and its associated $L$-function is defined by the series
\[
L(s,\chi):=\sum_{n=1}^{\infty}\frac{\chi(n)}{n^s}.
\]
The series on the right-hand side only makes sense when $\re(s)>1$, but it is well known that  the function $L(s,\chi)$ has an analytic continuation defined over the whole complex plane. The locations of the zeros of $L(s,\chi)$ are particularly important in number theory. One of the most important open problem in mathematics -- the Generalized Riemann Hypothesis (GRH) -- asserts that all zeros with positive real parts lie precisely on the vertical line $\re(s)=\tfrac{1}{2}.$ 

Siegel zeros or sometimes called Landau-Siegel zeros are hypothetical real zeros of the $L$-functions that lie very close to $1$. The existence of these zeros has not yet been ruled out, but it is known that $L(s,\chi)$ has at most one simple zero in an interval of the form $(1-c/\log |D|,\ 1)$ , see Page \cite{Page}. Morrill and Trudgian, in \cite{Morrill-Trudgian}, recently  gave an explicit version of the latter statement with $c=1.011$  using Pintz's refinement of Page's theorem. The largest positive zero of $L(s,\chi)$, if it exists, will be denoted by $\beta$ throughout this paper.

We are interested in the upper bound on $\beta$, or equivalently the lower bound on the distance from $\beta$ to $1$, for the case $D=-d$ where $d\geq 3$. It is well known that there exists an absolute constant $c>0$ such that $1-\beta > c/\sqrt{d}$, see Haneke \cite{Haneke}, Goldfeld and Schinzel \cite{Gold}, and Pintz \cite{Pintz}. In particular, it is shown in the Goldfeld-Schinzel's paper that
\begin{equation}\label{eq:betaG-S}
1-\beta > \left(\frac{6}{\pi}+o(1)\right)\frac{1}{\sqrt{d}} \ \ \text{as } \ \ d\to \infty.
\end{equation}
Pintz achieved a similar result, but with a different method. He improved the constant $\frac{6}{\pi}$ to $\frac{12}{\pi}$, and then improved it further to $\frac{16}{\pi}$ following Schinzel's remark, see the footnote on page 277 of \cite{Pintz}. We are unaware of any result of the form $1-\beta > c/\sqrt{d}$ with an explicit constant $c>0$ prior to this work.  Known explicit results have an additional $(\log d)^2$ term in the denominator, see \cite[Lemma 3]{Ford}, \cite{Bennett}, and \cite{Bordignon}. Most of these papers made use of  explicit upper bounds for $L'(1,\chi)$.  We do not follow this route, instead, we use the method of Goldfeld and Schinzel in \cite{Gold}.

It is worth noting that $L(s,\chi)$ does not have positive real zeros for at least a positive proportion of fundmental discriminats $-d$, see \cite{Conrey}. Moreover, Watkins' computational results in \cite{Watkins1} show that the same holds for all $L(s,\chi)$  with fundamental discriminants $-d$  such that $d\leq 300000000$. The following theorem is our main result.

\begin{theorem}\label{thm:main}
Let $d> 300000000$ such that $-d$ is a fundamental discriminant. Let $L(s,\chi)$ be the Dirichlet $L$-function associated with  the primitive character  $\chi(n)=\left(\frac{-d}{n}\right)$. If there exists $\beta>0$ such that $L(\beta,\chi)=0$, then 
\begin{equation}\label{eq:beta}
1-\beta >  \frac{6.5}{\sqrt{d}}.
\end{equation}
\end{theorem}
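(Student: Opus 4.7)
Following the method of Goldfeld and Schinzel, the plan starts from the positivity of the Dirichlet coefficients of $\zeta(s)L(s,\chi)$: writing
\[
\zeta(s)L(s,\chi) \;=\; \sum_{n\geq 1}\frac{r(n)}{n^s},\qquad r(n)=\sum_{e\mid n}\chi(e),
\]
one has $r(n)\geq 0$, because $\chi$ is the quadratic character of the imaginary field $\mathbb{Q}(\sqrt{-d})$; in fact $w\cdot r(n)$ equals the number of representations of $n$ by positive definite binary forms of discriminant $-d$, where $w\in\{2,4,6\}$ is the order of the unit group.

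For a carefully chosen compactly supported, non-negative weight $\phi$, I would form the positive quantity
\[
S(x):=\sum_{n\geq 1} r(n)\,\phi(n/x)\;\geq\;0
\]
and express it by Mellin inversion as
\[
S(x)=\frac{1}{2\pi i}\int_{(c)}\zeta(s)L(s,\chi)\,\widetilde{\phi}(s)\,x^s\,ds,\qquad c>1.
\]
The weight $\phi$ (or rather its Mellin transform) is designed so that the integrand acquires a pole at $s=\beta$, which the hypothesis $L(\beta,\chi)=0$ then cancels; this is the step at which the suspected zero $\beta$ enters the bookkeeping.

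Shifting the contour to $\re(s)=-\sigma$ for some small $\sigma>0$ picks up the residue at the pole $s=1$ of $\zeta$, producing a main term proportional to $L(1,\chi)\,x$, a term of order $x^\beta$ from the behaviour at $s=\beta$, and a residue at $s=0$ controlled via the functional equation of $L(s,\chi)$. The remaining integral on $\re(s)=-\sigma$ will be bounded using explicit convexity estimates in the critical strip. Applying the class number formula $L(1,\chi)=\pi h(-d)/\sqrt{d}\geq \pi/\sqrt{d}$ then converts the leading residue into a quantity of size $x/\sqrt{d}$, and the inequality $S(x)\geq 0$ becomes an explicit one-variable inequality relating $1-\beta$, $x$ and $d$. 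Taking $x$ of order $\sqrt{d}$ and tuning $\phi$ should finally deliver $1-\beta\geq 6.5/\sqrt{d}$.

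The hard part will be the entirely explicit accounting of error terms so that the numerical constant reaches $6.5$, rather than the asymptotic values $6/\pi$ and $16/\pi$ of \cite{Gold} and \cite{Pintz}; this is where the hypothesis $d>3\cdot 10^8$ comes in, giving enough room for the contributions from the contour integral on $\re(s)=-\sigma$ and from $L(0,\chi)$ to be absorbed into the leading term. The complementary range $d\leq 3\cdot 10^8$ is handled entirely by Watkins' computation, which certifies that $L(s,\chi)$ has no positive real zero at all there.
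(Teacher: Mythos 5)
Your framework is the right family of ideas---it is essentially the Goldfeld--Schinzel setup the paper follows: nonnegative coefficients of $\zeta_{-d}(s)=\zeta(s)L(s,\chi)$, a weighted sum expressed by a contour integral whose kernel is arranged so that the hypothetical zero $\beta$ cancels a pole, a shift of the contour picking up the residue at the pole of $\zeta_{-d}$ proportional to $L(1,\chi)$, functional-equation bounds on the shifted line, and the class number formula with $x\asymp\sqrt d$. But the engine of the proof is missing. Positivity alone, $S(x)\ge 0$, is an inequality in the wrong direction: after the shift the main residue is a positive multiple of $L(1,\chi)x$, nothing in your identity turns negative as $\beta\to 1^-$ (indeed, once the pole at $s=\beta$ is cancelled there is no residue ``of order $x^\beta$'' at all), so $S(x)\ge 0$ holds trivially for every $\beta$ and yields no relation between $1-\beta$, $x$ and $d$. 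What the argument needs is a nontrivial \emph{upper} bound for the positive quantity, to be played against a main term that blows up like $1/(1-\beta)$. In the paper this is exactly the pair: the shift by $\beta$ inside $\zeta_{-d}(s+\beta)$ puts the pole at $s=1-\beta$, giving the lower bound \eqref{eq:lowerI} with main term $\frac{L(1,\chi)\,x^{1-\beta}}{(1-\beta)(3-\beta)(4-\beta)}$, while Lemma~\ref{upperI} gives $I\le\frac16 x^{1-\beta}\sum_{N(\ia)\le x}N(\ia)^{-1}$; comparing the two is what bounds $1-\beta$ from below.

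The second missing idea is the one that delivers the $1/\sqrt d$ quality rather than $1/(\sqrt d(\log d)^2)$: an upper bound for $\sum_{N(\ia)\le x}N(\ia)^{-1}$ in terms of $h(-d)$ alone, and the resulting cancellation of $h(-d)$. For $x$ essentially $\tfrac12\sqrt d$, each ideal class contains at most one ideal $[a,\tfrac{b+\sqrt{-d}}{2}]$ with $a\le\tfrac12\sqrt d$ (its form is reduced), so the sum $\sum_{a\le\frac12\sqrt d}\nu(a)/a$ has at most $h(-d)$ terms, and a greedy argument with $\nu(a)\le 2^{w(a)}$ gives the bound $h(-d)/11$ of Lemma~\ref{lem:h}; this step needs $h(-d)\ge 101$, i.e.\ Watkins' classification of class numbers up to $100$, not only his zero-free computation for $d\le 3\cdot 10^8$. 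Inserting $L(1,\chi)=\pi h(-d)/\sqrt d$ in the main term, $h(-d)$ cancels between the two sides and $(1-\beta)\sqrt d$ is bounded below by an absolute constant. Your plan instead discards $h(-d)$ at the outset via $L(1,\chi)\ge\pi/\sqrt d$; even if you supplied the missing upper bound, that step could only give $1-\beta\gg 1/(h(-d)\sqrt d)$, which is far weaker than \eqref{eq:beta} since $h(-d)$ can be as large as about $\sqrt d\log d$. Finally, with $x\asymp\sqrt d$ the shifted-contour error is of size $\sqrt d\log d/x\asymp\log d$, which is not automatically absorbed: the paper argues by contradiction from $1-\beta\le 6.5/\sqrt d$, uses $h(-d)\ge 101$, and for large $d$ must take $x=\tfrac12\sqrt d\,f(d)$ with $f(d)>1$, which in turn forces the prime-power analysis of Proposition~\ref{prop} and the splitting of the range $(\tfrac12\sqrt d,\,x]$ so that the norm sum stays of size $O(h(-d))$. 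These are the concrete steps your ``explicit accounting'' would have to carry out, and they are absent from the proposal.
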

Another Watkins' paper \cite{Watkins2} provides a classification of all imaginary quadratic fields with class number less or equal to $100$.  The combination of the results from \cite{Watkins1} and \cite{Watkins2} guarantees that we may only consider the case where the class number $h(-d)$ of the corresponding imaginary quadratic field $\mathbb{Q}(\sqrt{-d})$ is at least $101$. We will see that a higher class number gives a better constant in \eqref{eq:beta}. In fact, we have the following asymptotic result in terms of the class number.
\begin{theorem}\label{thm:t2}
Let $d$ and $\beta$ be as in Theorem~\ref{thm:main} and let  $h(-d)$ be the class number of the quadratic field $\mathbb{Q}(\sqrt{-d}).$ Then, we have 
\begin{equation}\label{eq:beta2}
1-\beta > \Big(2\pi+o(1)\Big)\frac{h(-d)}{(\log h(-d))^2\, \sqrt{d}}\, \, \text{ as }\, \, h(-d)\to \infty. 
\end{equation}
\end{theorem}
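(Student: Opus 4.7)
The plan is to extend the approach underlying Theorem~\ref{thm:main} into an asymptotic regime as $h(-d)\to\infty$. The crucial input is the Dirichlet class number formula $L(1,\chi)=\pi h(-d)/\sqrt{d}$ (valid for $d>4$), which converts the right-hand side of \eqref{eq:beta2} into a statement about $L(1,\chi)$. Explicitly, \eqref{eq:beta2} is equivalent to
\[
L(1,\chi)\le\Bigl(\tfrac12+o(1)\Bigr)(1-\beta)\bigl(\log h(-d)\bigr)^{2}\qquad\text{as }h(-d)\to\infty.
\]
I would approach this by following the Goldfeld--Schinzel identity invoked for Theorem~\ref{thm:main}: start from the non-negativity of the Dirichlet coefficients of $\zeta_{K}(s)=\zeta(s)L(s,\chi)$, which count representations of integers by binary quadratic forms of discriminant $-d$, and write a suitable weighted sum in two ways, using the pole of $\zeta$ at $s=1$ and the hypothesised zero of $L(s,\chi)$ at $s=\beta$.

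With a truncation parameter $T$ controlling the weighting, the identity produces an inequality of the schematic form
\[
L(1,\chi)\le\bigl(\tfrac12+o(1)\bigr)(\log T)^{2}(1-\beta)+(\text{tail error depending on }\sqrt{d},\,d,\,T),
\]
where the leading constant $\tfrac12(\log T)^{2}$ arises from the elementary estimate $\sum_{n\le T}(\log n)/n\sim\tfrac12(\log T)^{2}$. Choosing $T$ so that $\log T=(1+o(1))\log h(-d)$ makes the main term precisely $(\tfrac12+o(1))(1-\beta)(\log h(-d))^{2}$, which matches the desired bound after invoking the class number formula. The tail, typically bounded via Polya--Vinogradov by $O(\sqrt{d}\,(\log d)^{2}/T^{s})$, would be controlled by substituting $\sqrt{d}=\pi h(-d)/L(1,\chi)$ and using the hypothesis $h(-d)\to\infty$ to absorb the error into the $o(1)$ term.

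The chief difficulty lies in this last step: only the crude inequality $\log h(-d)\le(\tfrac12+o(1))\log d$ is available unconditionally, so the target $(\log h(-d))^{2}$ can be strictly smaller than the classical $(\log d)^{2}$ bound for $L'(1,\chi)$, and a naive tail estimate at $T\asymp\sqrt{d}$ is inadequate. The proof must therefore feed the class number formula back into the truncation level, bootstrapping the largeness of $h(-d)$ into the very inequality it is being used to prove. Once this self-referential calibration of $T$ is closed up, the constant $2\pi$ in \eqref{eq:beta2} emerges cleanly as the product of the $\pi$ from the class number formula and the factor $2$ obtained by inverting the leading coefficient $\tfrac12$ of the $(\log h(-d))^{2}$ main term.
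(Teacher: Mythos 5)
There is a genuine gap, and it sits exactly at the point you yourself flag as ``the chief difficulty''. Your plan produces a main term $(\tfrac12+o(1))(1-\beta)(\log T)^2$ from $\sum_{n\le T}(\log n)/n$ and then proposes to take $\log T=(1+o(1))\log h(-d)$, controlling the tail by Polya--Vinogradov. But the tail of such a truncated identity is of size roughly $\sqrt d\,(\log d)^{O(1)}/T$, so it is negligible only when $T$ is at least about $\sqrt d$; since $h(-d)$ can be (and, precisely in the presence of a Siegel zero, is expected to be) far smaller than $\sqrt d$, the choice $T\asymp h(-d)/\log h(-d)$ leaves an uncontrolled error. The ``self-referential calibration of $T$'' that is supposed to fix this is asserted, not carried out, so your scheme as written only recovers the known $(\log d)^2$-type bounds and never establishes a main term of size $(\log h(-d))^2$.

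The paper reaches $(\log h(-d))^2$ by a different mechanism, and this is the idea missing from your proposal: one does not shorten the truncation, one exploits the sparsity of norms of primitive ideals. Starting from the Goldfeld--Schinzel inequality \eqref{eq:GS_as}, $1-\beta\ge\bigl(\tfrac{6}{\pi^2}+o(1)\bigr)L(1,\chi)\big/\sum_{a\le\frac14\sqrt d}\nu(a)/a$, one observes (as in Lemma~\ref{lem:h}) that each ideal class contains at most one primitive ideal of norm $a\le\tfrac12\sqrt d$, so at most $h(-d)$ values of $a$ contribute, counted with multiplicity $\nu(a)\le 2^{w(a)}$; the reciprocal sum is therefore maximized by packing this mass at the smallest integers, giving $\sum_{a\le\frac14\sqrt d}\nu(a)/a\le\sum_{a\le y}2^{w(a)}/a$ with $y$ defined by $\sum_{a\le y}2^{w(a)}\approx h(-d)$. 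By Lemma~\ref{asym2omega}, $y=\bigl(\tfrac{\pi^2}{6}+o(1)\bigr)h(-d)/\log h(-d)$, so the denominator is at most $\bigl(\tfrac{3}{\pi^2}+o(1)\bigr)(\log h(-d))^2$, and combining with $L(1,\chi)=\pi h(-d)/\sqrt d$ yields the constant $2\pi=\tfrac{6}{\pi^2}\cdot\pi\cdot\tfrac{\pi^2}{3}$. Your accounting of $2\pi$ as $\pi$ times the inverse of $\tfrac12$ is tied to the unestablished main term, so even the constant in your argument rests on the step that is not proved.
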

It is also possible to obtain an explicit bound for the $o(1)$ term in \eqref{eq:beta2} in terms of $h(-d)$. But this will not be very useful unless we have an explicit lower bound for the class number $h(-d)$, which is a much harder problem.

This paper is organized as follows: In Section~\ref{sec-pre} we prove two preliminary results, one on the sum of reciprocal prime powers and the other on the sum of reciprocal ideal norms. The proof of Theorem~\ref{thm:main} and Theorem~\ref{thm:t2} are done in Section~\ref{sec-tmain} and Section~\ref{sec-t2} respectively. We conclude with a short discussion on possible improvements of Theorem~\ref{thm:main} in Section~\ref{sec-conc}.  

\section{Preliminary results}\label{sec-pre}

\subsection{Sum of reciprocal prime powers}
We are going to need explicit estimates for the sum $\sum_{p^{\alpha}\leq x }p^{-\alpha}$, where the sum is taken over the prime powers $p^{\alpha}$ not exceeding $x$. It is clear that $\sum_{p^{\alpha}\leq x }p^{-\alpha}$ is greater than the sum of reciprocal primes $\sum_{p\leq x}p^{-1}$ but they are asymptotically equal as $x\to\infty$. It is well known that for $x\geq 3$, we have
$$
\sum_{p\leq x}p^{-1}= \log \log x +B_1+o(1),
$$ 
where $B_1$ is known as the Mertens constant, ref. Sequence A077761 in the OEIS. Dusart \cite{Dusart} recently provided an  explicit bound for the error term in the above estimate. It is shown, see \cite[Theorem 5.6]{Dusart}, that for every $x\geq 2278383$, we have
\begin{equation}\label{dusart}
\Big|\sum_{p\leq x}p^{-1}-\log\log x-B_1\Big|\leq \frac{0.2}{(\log x)^3}.
\end{equation}
We use this result to obtain an explicit estimate for the sum of reciprocal prime-powers.
\begin{prop}\label{prop}
For every $x\geq 2$, we have 
\begin{equation}\label{eq:recip}
-\frac{1.75}{(\log x)^2}\leq \sum_{p^{\alpha}\leq x}p^{-\alpha}-\log\log x-B_2\leq \min \left\lbrace\frac{0.2}{(\log x)^3},\ 10^{-4}\right\rbrace,
\end{equation}
where  
\[
B_2=B_1+\sum_{\alpha\geq 2}\sum_{p}p^{-\alpha}= 1.03465\ldots
\]
\end{prop}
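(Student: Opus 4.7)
The plan is to split the sum by isolating $\alpha=1$ from $\alpha\geq 2$ and then apply Dusart's bound (\ref{dusart}) to the primes part while controlling the higher prime powers by an elementary tail estimate. Writing
\[
\sum_{p^\alpha\leq x} p^{-\alpha} \;=\; \sum_{p\leq x} p^{-1} \;+\; \sum_{\alpha\geq 2}\,\sum_{p^\alpha\leq x} p^{-\alpha}
\]
and using the defining equation $B_2 = B_1 + \sum_{\alpha\geq 2}\sum_p p^{-\alpha}$, subtraction yields the master identity
\[
\sum_{p^\alpha\leq x} p^{-\alpha} - B_2 - \log\log x \;=\; \Bigl(\sum_{p\leq x} p^{-1} - B_1 - \log\log x\Bigr) - R(x),
\]
with $R(x):=\sum_{\alpha\geq 2}\sum_{p>x^{1/\alpha}} p^{-\alpha}\geq 0$. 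The proposition is then reduced to estimating the classical Mertens error and the tail $R(x)$.

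For $x\geq 2278383$, (\ref{dusart}) confines the first bracket to $[-0.2/(\log x)^3,\,0.2/(\log x)^3]$. The upper bound in (\ref{eq:recip}) follows immediately from $R(x)\geq 0$ together with the direct check that $0.2/(\log x)^3 < 10^{-4}$ on this range. For the lower bound I would bound $R(x)$ term by term via $\sum_{p>y} p^{-\alpha}\leq \sum_{n\geq\lceil y\rceil}n^{-\alpha}$ followed by integral comparison; the sum over $\alpha\geq 2$ is dominated by the $\alpha=2$ contribution, of size $O(x^{-1/2})$, which is much smaller than $1.75/(\log x)^2 - 0.2/(\log x)^3$ throughout.

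For the complementary range $2\leq x < 2278383$ I would verify (\ref{eq:recip}) by direct computation, using the structural observation that $f(x):=\sum_{p^\alpha\leq x}p^{-\alpha} - B_2 - \log\log x$ is strictly decreasing on every interval $[q_i,q_{i+1})$ between consecutive prime powers and jumps upward by $p^{-\alpha}$ at each prime power $p^\alpha$. Consequently $\sup_{[q_i,q_{i+1})}f = f(q_i)$ and $\inf_{[q_i,q_{i+1})}f = \lim_{x\to q_{i+1}^-}f(x)$, so the verification collapses to evaluating $f$ at finitely many prime powers below $2278383$ and comparing against the explicit bounds. Extra care is required at the transition $0.2/(\log x)^3 = 10^{-4}$ around $x\approx 2.96\times 10^5$, where the two regimes of the minimum in (\ref{eq:recip}) meet.

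The principal obstacle is calibrating the constant $1.75$ in the lower bound: it is dictated by the worst value of $f(x)(\log x)^2$ over the finite range $[2, 2278383]$ and can only be certified by a numerical scan over the roughly $1.7\times 10^5$ primes and their low powers therein. Once this verification is in hand, the analytic half of the argument for $x\geq 2278383$ is essentially routine.
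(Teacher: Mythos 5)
Your proposal follows essentially the same route as the paper: the same split into the Mertens sum plus the $\alpha\geq 2$ tail $R(x)=O(x^{-1/2})$, Dusart's bound on $x\geq 2278383$, and a finite computational verification on $[2,\,2278383]$ reduced to prime powers via the same monotonicity observations. The only small point to tighten is the upper bound on the finite range, where the comparison function $\min\{0.2/(\log x)^3,\,10^{-4}\}$ also decreases between consecutive prime powers; the paper sidesteps this by verifying the stronger inequality $\varepsilon(p^{\alpha})<0$ at every prime power, which makes the check at the left endpoint suffice.
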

The constant $B_1$ is sometimes referred to as the prime reciprocal constant, so we could  analogously call $B_2$ as the prime power reciprocal constant. $B_2$ also appears in the OEIS as Sequence A083342.

\begin{remark} The lower bound in \eqref{eq:recip} could be made of the form $c/(\log x)^3$ as in Dusart's result above, but we chose to use the asymptotically weaker bound in the proposition as it gives a slightly better approximation for small values of $x$, see the comparison with the exact error in Figure~\ref{fig}.
\begin{center}
\begin{figure}[h]\label{fig}
\centering
\includegraphics[scale=.6]{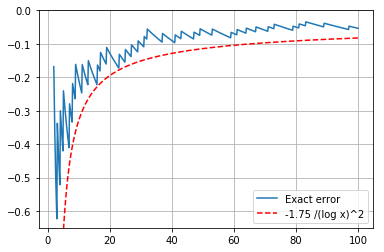}
\caption{Lower bound in \eqref{eq:recip} and exact error.}
\end{figure} 
\end{center}

\end{remark}
\begin{proof}
For $x\geq 2$, we have 
\[
0\leq \sum_{p^{\alpha}\leq x}p^{-\alpha} -\sum_{p\leq x}p^{-1}\leq \sum_{\alpha\geq 2}\sum_{p}p^{-\alpha}.
\]
Let us denote the double summation on the right-hand side by $C.$ It is easy to check that it is convergent. In fact, one has
\[
C=\lim_{N\to\infty}\sum_{\alpha\geq 2}\sum_{p\leq N}p^{-\alpha}=\lim_{N\to\infty}\sum_{p\leq N}\sum_{\alpha\geq 2}p^{-\alpha}=\sum_{p}\frac{1}{p^2-p}.
\]
This implies that 
\begin{equation}\label{upper}
\sum_{p^{\alpha}\leq x}p^{-\alpha} \leq \sum_{p\leq x}p^{-1}+C.
\end{equation}

Now, for the lower bound, we have 
\[
\sum_{p^{\alpha}\leq x}p^{-\alpha} -\sum_{p\leq x}p^{-1} 
= C-\sum_{\alpha\geq 2}\sum_{p^{\alpha}>x}p^{-\alpha},
\]
and
\[
\sum_{\alpha\geq 2}\sum_{p^{\alpha}>x}p^{-\alpha}\leq \sum_{p> \sqrt{x}}\frac{1}{p^2-p}\leq \sum_{n>\sqrt{x}}\frac{1}{n^2-n}=\frac{1}{\lceil \sqrt{x}\rceil-1}.
\]
Thus 
\begin{equation}\label{lower}
\sum_{p^{\alpha}\leq x}p^{-\alpha} \geq \sum_{p\leq x}p^{-1}+C-\frac{1}{\lceil \sqrt{x}\rceil-1}.
\end{equation}
Combining \eqref{upper} and \eqref{lower}, with Dusart's bound \eqref{dusart}, we obtain the following: for $x\geq 2278383$, 
\begin{equation}\label{bound2}
-\frac{0.2}{(\log x)^3}-\frac{1}{\lceil \sqrt{x}\rceil-1}
\leq \sum_{p^{\alpha}\leq x}p^{-\alpha}-\log\log x -B_1-C\leq \frac{0.2}{(\log x)^3}.
\end{equation}
It is easy to check that for $x\geq 2278383$, the latter bounds imply the estimates \eqref{eq:recip} in the statement of the proposition (if $x\geq 2278383$, then $0.2/(\log x)^3<10^{-4}$). It now remains to check that \eqref{eq:recip} also holds for all $x<2278383.$ We can use a computer check for this, but we need to be cautious because $x$ can take any real value. First, for $x\geq 2$, we let 
\[
\varepsilon(x):=\sum_{p^{\alpha}\leq x}p^{-\alpha}-\log\log x -B_2.
\]    
Then, we can easily show from this definition that if $p^{\alpha}$ is the greatest prime power $\leq x$ then
\[
\varepsilon(x)\leq  \varepsilon(p^{\alpha}).   
\] 
We verified numerically with a computer program that $\varepsilon(p^{\alpha})<0$ for all prime powers $p^{\alpha}$ in the interval $[2,\  2278383]$, which proves the upper bound in \eqref{eq:recip}. Similarly, if $p^{\alpha}$ is the least prime power $\geq x$ , then we have 
\[
\varepsilon(x)+\frac{1.75}{(\log x)^2}\geq \varepsilon(p^{\alpha})+\frac{1.75}{(\log p^{\alpha})^2}-
\begin{cases}
p^{-\alpha} & \text{if }  x<p^{\alpha}\\[.5em]
0 & \text{if }  x=p^{\alpha}. 
\end{cases}
\]
Again, we checked with a computer program that $\varepsilon(p^{\alpha})+\frac{1.75}{(\log p^{\alpha})^2}-p^{\alpha}>0$ for all prime powers $p^{\alpha}$ in the interval $[2,\  2278421]$ (the number 2278421 is the smallest prime power greater than 2278383). Therefore, we deduce that $\varepsilon(x)+\frac{1.75}{(\log x)^2}> 0$ for all $x\in [2,\  2278383]$, which completes the proof of the proposition.
\end{proof}

\subsection{Exploiting the class number} \label{hd}
The approach of Goldfeld and Schinzel involves reciprocal sums of norms of ideals of the form   
\begin{equation}\label{eq:sum-norm}
\sum_{N(\ia)\leq x}\frac{1}{N(\ia)},
\end{equation}
where $x\geq 1$ and $\ia$ runs over all nonzero ideals of the ring of integers $\mathcal{O}_{\mathbb{Q}(\sqrt{-d})}$. In order to understand such sums, let us recall some useful results from the classical theory of imaginary quadratic fields. For each positive integer $a$ let $\nu(a)$ denote the number of representations of $a$ as a norm of an ideal of $\mathcal{O}_{\mathbb{Q}(\sqrt{-d})}$ that is not divisible by any rational integer $>1$. Such an ideal can be written uniquely in the form  
\[
\left[a,\tfrac{b+\sqrt{-d}}{2}\right]:=\left\lbrace an+\tfrac{b+\sqrt{-d}}{2} m\ :\ n,m\in \mathbb{Z}\right\rbrace,
\]
where $a\geq 1$,  $-a< b \leq a$, and $b^2\equiv -d\ (\mathrm{mod}~4a).$ Moreover, every other ideal can be written in the form $u[a,\tfrac{b+\sqrt{-d}}{2}]$, where $u$ is a positive integer, and the norm of such an ideal is $u^2a$. So we can rewrite the sum in \eqref{eq:sum-norm} as follows: 
\[
\sum_{N(\ia)\leq x}\frac{1}{N(\ia)}=\sum_{u^2a\leq x} \frac{\nu(a)}{u^2a}.
\]
We have the following important lemma concerning the arithmetic function $\nu(\cdot).$ It was given without proof in \cite{Gold}, so we will provide a quick proof here.
\begin{lemma}\label{lem:nu}
The function $\nu(\cdot)$ is multiplicative with 
\[
\nu(p^{\alpha})=
\begin{cases}
1+\chi(p) & \text{if}\ p \nmid d \text{ or }\ \alpha=1 \\
0 & \text{otherwise}.
\end{cases}
\]
\end{lemma}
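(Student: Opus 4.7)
The plan is to use the splitting behavior of rational primes in $\mathcal{O}_{\mathbb{Q}(\sqrt{-d})}$ together with unique factorization of ideals into prime ideals, and to carry out a case analysis on whether $p$ splits, is inert, or ramifies.

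First, I would establish multiplicativity. If $\ia$ is an ideal not divisible by any rational integer $>1$, write $\ia=\prod_{\mathfrak{p}}\mathfrak{p}^{e_\mathfrak{p}}$ via unique factorization into prime ideals, and group the prime ideals according to the rational prime $p$ they lie over. Since norms are multiplicative, the $p$-part $\prod_{\mathfrak{p}\mid p\mathcal{O}}\mathfrak{p}^{e_\mathfrak{p}}$ contributes exactly $p^{v_p(N(\ia))}$ to the norm, and the condition of not being divisible by any rational integer $>1$ is equivalent to the condition that for every $p$ the $p$-part is not divisible by $p\mathcal{O}$. Since these local conditions are independent over distinct primes, the set of ideals counted by $\nu(ab)$ with $\gcd(a,b)=1$ is in bijection with pairs counted by $\nu(a)$ and $\nu(b)$; hence $\nu$ is multiplicative.

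Next I would compute $\nu(p^\alpha)$ by cases using the classical dichotomy in quadratic fields:
\begin{itemize}
\item If $p$ splits (so $\chi(p)=1$, and in particular $p\nmid d$), then $p\mathcal{O}=\mathfrak{p}_1\mathfrak{p}_2$ with $N(\mathfrak{p}_i)=p$ and $\mathfrak{p}_1\neq\mathfrak{p}_2$. The ideals above $p$ of norm $p^\alpha$ are exactly $\mathfrak{p}_1^i\mathfrak{p}_2^j$ with $i+j=\alpha$, and the non-divisibility by $p\mathcal{O}$ forces $\min(i,j)=0$, leaving the two ideals $\mathfrak{p}_1^\alpha$ and $\mathfrak{p}_2^\alpha$, so $\nu(p^\alpha)=2=1+\chi(p)$.
\item If $p$ is inert (so $\chi(p)=-1$, $p\nmid d$), then $p\mathcal{O}$ is itself a prime ideal of norm $p^2$, so any nontrivial ideal above $p$ is divisible by $p\mathcal{O}$. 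Hence $\nu(p^\alpha)=0=1+\chi(p)$.
\item If $p$ ramifies (so $\chi(p)=0$ and $p\mid d$), then $p\mathcal{O}=\mathfrak{p}^2$ with $N(\mathfrak{p})=p$. The only ideal above $p$ of norm $p^\alpha$ is $\mathfrak{p}^\alpha$, which is divisible by $p\mathcal{O}=\mathfrak{p}^2$ exactly when $\alpha\geq2$. Therefore $\nu(p)=1=1+\chi(p)$ and $\nu(p^\alpha)=0$ for $\alpha\geq2$.
\end{itemize}
Combining these three cases matches the formula in the statement exactly.

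There is no significant obstacle here: the lemma is a straightforward consequence of prime ideal factorization in imaginary quadratic fields. The only subtle point requiring care is the ramified case, where one must distinguish between $\alpha=1$ (for which $\mathfrak{p}$ is not divisible by $p\mathcal{O}$) and $\alpha\geq2$ (for which $\mathfrak{p}^\alpha$ is divisible by $\mathfrak{p}^2=p\mathcal{O}$); this is precisely the source of the "otherwise" branch in the statement of the lemma.
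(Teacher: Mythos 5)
Your proof is correct and follows essentially the same route as the paper: multiplicativity via unique factorization of ideals, then a case analysis on whether $p$ splits, is inert, or ramifies. The only difference is cosmetic: in the ramified case with $\alpha\geq 2$ the paper first uses the fundamental-discriminant hypothesis to reduce to $p=2$ and $\alpha\in\{2,3\}$ before observing that $\mathfrak{p}^{\alpha}$ is divisible by $(p)$, whereas you handle all ramified primes uniformly with that same observation, which is if anything slightly cleaner.
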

\begin{proof}
The multiplicativity of $\nu(\cdot)$ follows easily from the unique factorization property of the ideals of $\mathcal{O}_{\mathbb{Q}(\sqrt{-d})}$.  As for the formula for $\nu(p^{\alpha})$, we use the charicterization of prime ideals in $\mathcal{O}_{\mathbb{Q}(\sqrt{-d})}$:
\begin{itemize}
\item If $\chi(p)=0$ and $\alpha=1$, then the only ideal with norm $p$ is the ideal $\mathfrak{p}$ with $(p)=\mathfrak{p}^2$.
\item If $\chi(p)=1$, then we have the factorization $(p)=\mathfrak{p_1}\mathfrak{p_2}$ with $N(\mathfrak{p_1})=N(\mathfrak{p_2})=p$. Hence, we have $(p^{\alpha})=\mathfrak{p}_1^{\alpha}\mathfrak{p}_2^{\alpha}.$ Thus, the only ideals with norm $p^{\alpha}$ that are not divisible by rational integers are  
$\mathfrak{p}_1^{\alpha}$ and $\mathfrak{p}_2^{\alpha}$, since any other choice will have to be divisible by both $\mathfrak{p}_1$ and $\mathfrak{p}_2$, i.e., divisible by $(p).$ 
\item If $\chi(p)=-1$, then $(p)$ is a prime ideal with norm $p^2.$ Hence, there are no ideals with norm $p^{\alpha}$ if $\alpha$ is odd. But if $\alpha$ is even, then any ideal with norm $p^{\alpha}$ will be divisible by $(p).$
\end{itemize}
The only remaining case is when $\chi(p)=0$ and $\alpha\geq 2$. However, since $-d$ is a fundamental discriminant, the only possibility for this to happen is for $d$ to be divisible by $4$,  $p=2$, and $\alpha = 2$ or $3$. But again, in this case, the only ideal with norm $2^{\alpha}$ is the ideal $\mathfrak{p}^{\alpha}$, where $\mathfrak{p}^2=(2).$ Since $\alpha\geq 2,$ such an ideal is divisible by $(2).$ 
\end{proof}
When studying sums over norms of ideals like \eqref{eq:sum-norm}, it is often useful to consider the Dedekind zeta function for $\mathbb{Q}(\sqrt{-d}).$ Let
\[
\zeta_{-d}(s):=\sum_{\ia}\frac{1}{N(\ia)^s}, 
\]
where $\ia$ runs over all nonzero ideals of $\mathcal{O}_{\mathbb{Q}(\sqrt{-d})}$ and $\re(s)>1$. Lemma~\ref{lem:nu} implies that   
\begin{equation}\label{eq:dedk}
\zeta_{-d}(s)=\zeta(s)L(s,\chi),
\end{equation}
which immediately provides an analytic continuation for $\zeta_{-d}(s)$. Equation~\eqref{eq:dedk} is well known, but also follows easily from Lemma~\ref{lem:nu}. Indeed, for $\re(s)>1$, we have 
\begin{align*}
\sum_{\ia}\frac{1}{N(\ia)^s}
& = \sum_{u^2a}\frac{\nu(a)}{u^{2s}a^s}\\
& =\sum_{u^2}\frac{1}{u^{2s}}\sum_{a}\frac{\nu(a)}{a^s}\\
& = \prod_{p}\left(1-p^{-2s}\right)^{-1}\prod_{p,\chi(p)=0}\left(1+p^{-s}\right)\prod_{p,\chi(p)=1}\left(1+2\frac{p^{-s}}{1-p^{-s}}\right)\\
& = \prod_{p}\left(1-p^{-s}\right)^{-1} \prod_{p,\chi(p)=-1}\left(1+p^{-s}\right)^{-1}\prod_{p,\chi(p)=1}\left(1-p^{-s}\right)^{-1}\\
& =\zeta(s)L(s,\chi).
\end{align*}

Every ideal $[a,\tfrac{b+\sqrt{-d}}{2}]$ corresponds to a binary quadratic form $ax^2+bxy+cy^2$, where $d=4ac-b^2$. Such a form is called reduced if $-a<b\leq a<c$ or $0\leq b\leq a=c$. The number of reduced forms is known as the class number of $\mathbb{Q}(\sqrt{-d})$, and we denote it by $h(-d)$. Watkins in \cite{Watkins2} gives all negative fundamental discriminant with class number less or equal to $100$. The largest absolute value of such discriminants is $2383747$ (whose class number is $98$). Moreover, it is shown in another Watkins' paper \cite{Watkins1} that for $d\leq 300000000$, the function $L(s,\chi)$ does not have positive real zeros. Hence, we can assume from now on that $d>300000000,$ and so 
\begin{equation}
h(-d)\geq 101.
\end{equation}
\begin{lemma}\label{lem:h}
Let $h(-d)$ be the class number of a quadratic field of discriminant $-d$ with $d>300000000$. Then, we have 
\begin{equation}
\sum_{a\leq \tfrac{1}{2}\sqrt{d}}\frac{\nu(a)}{a}\leq \frac{h(-d)}{11}.
\end{equation}
\end{lemma}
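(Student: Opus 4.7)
Let $X=\tfrac{1}{2}\sqrt{d}$. The strategy is to dominate the target sum by a sum over all ideal norms, and then estimate the latter explicitly using a Gauss-type lattice-point count in each ideal class, together with the class number formula $L(1,\chi)=\pi h(-d)/\sqrt{d}$.

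First, using the decomposition $\mathfrak{a}=u\mathfrak{a}'$ with $u\in\mathbb{Z}_{>0}$ and $\mathfrak{a}'$ primitive already exploited in the excerpt, we have
\[
\sum_{N(\mathfrak{a})\leq X}\frac{1}{N(\mathfrak{a})}=\sum_{u\geq 1}\frac{1}{u^{2}}\sum_{a\leq X/u^{2}}\frac{\nu(a)}{a}\geq \sum_{a\leq X}\frac{\nu(a)}{a},
\]
so it suffices to bound $\sum_{N(\mathfrak{a})\leq X}1/N(\mathfrak{a})$ by $h(-d)/11$.

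Second, for each of the $h(-d)$ ideal classes $\mathcal{C}$, pick a reduced representative for $\mathcal{C}^{-1}$ with reduced form $f_{\mathcal{C}}(x,y)=a_{\mathcal{C}}x^{2}+b_{\mathcal{C}}xy+c_{\mathcal{C}}y^{2}$ of discriminant $-d$. The ideals of $\mathcal{C}$ of norm $\leq X$ are in two-to-one correspondence (via the units $\pm 1$, since $d>4$) with lattice points $(u,v)\in\mathbb{Z}^{2}\setminus\{(0,0)\}$ satisfying $f_{\mathcal{C}}(u,v)\leq X$. The ellipse $\{f_{\mathcal{C}}\leq t\}$ has area $2\pi t/\sqrt{d}$, so Gauss's classical lattice-point estimate gives
\[
\#\{\mathfrak{a}\in\mathcal{C}:N(\mathfrak{a})\leq t\}=\frac{\pi t}{\sqrt{d}}+E_{\mathcal{C}}(t),\qquad |E_{\mathcal{C}}(t)|\leq C\sqrt{t/a_{\mathcal{C}}},
\]
with $C$ absolute. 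Partial summation in each class and summing over the $h(-d)$ classes yields an explicit bound of the form
\[
\sum_{N(\mathfrak{a})\leq X}\frac{1}{N(\mathfrak{a})}\leq \frac{\pi h(-d)(\log X+1)}{\sqrt{d}}+\text{(explicit error)}.
\]

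Finally, substitute $X=\tfrac{1}{2}\sqrt{d}$ and $d>3\times 10^{8}$. Then $\sqrt{d}>17\,320$ and $\log X<10$, so the main term is at most $\tfrac{\pi h(-d)\cdot 11}{17\,320}<h(-d)/500$, very comfortably below $h(-d)/11$. The heuristic thus leaves nearly two orders of magnitude of safety.

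\textbf{Main obstacle.} Making the per-class Gauss-circle error term explicit and controlling $\sum_{\mathcal{C}}1/\sqrt{a_{\mathcal{C}}}$ uniformly over classes. The naive bound $\sum 1/\sqrt{a_{\mathcal{C}}}\leq h(-d)$ gives an error comparable to $h(-d)$, which is too weak; a Cauchy--Schwarz bootstrap of the form $\bigl(\sum 1/\sqrt{a_{\mathcal{C}}}\bigr)^{2}\leq h(-d)\sum\nu(a)/a$ feeds the target sum back into the error estimate and must be iterated carefully, or alternatively one passes from lattice counts to the Epstein-type sums $\sum 1/f_{\mathcal{C}}(u,v)$ directly and exploits the simple pole of $\zeta(s)L(s,\chi)/\zeta(2s)$ at $s=1$. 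In either case the analytic content ultimately reduces to the class number formula, and since there is so much room between the heuristic main term $\asymp h(-d)\log d/\sqrt{d}$ and the target $h(-d)/11$, the explicit constants need not be sharp.
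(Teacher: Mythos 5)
Your plan has a genuine gap, and it sits exactly where you flagged the ``main obstacle'': in the range $t\le X=\tfrac12\sqrt{d}$ the Gauss main term per class, $\pi t/\sqrt{d}\le \pi/2$, is bounded, so the lattice-point ``error'' is not a correction but the dominant contribution. For instance the principal class already contains about $\sqrt{t}$ ideals of norm $\le t$ (the ideals $(n)$ with $n^{2}\le t$), dwarfing the area term. After partial summation the error contribution to $\sum_{N(\mathfrak{a})\le X}1/N(\mathfrak{a})$ is of order $\sum_{\mathcal{C}}1/a_{\mathcal{C}}$, and since primitive ideals of norm $a\le\tfrac12\sqrt{d}$ correspond to reduced forms, this is essentially the very sum you are trying to bound. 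Your bootstrap therefore has the shape $S\le \mathrm{main}+C'S$, where $C'$ comes from the perimeter constants in the convex lattice-point count; nothing makes $C'<1$ (naively $C'\approx 4$), so the iteration does not close. The claim of ``two orders of magnitude of safety'' applies only to the negligible main term, not to the part that matters. The alternative of exploiting the pole of $\zeta(s)L(s,\chi)/\zeta(2s)$ at $s=1$ cannot rescue this either: its residue is proportional to $L(1,\chi)$, which is precisely the quantity that may be abnormally small in the Siegel-zero regime, and averaging/character-sum arguments only control $\nu(a)$ on ranges much longer than $\sqrt{d}$. In short, the analytic route reduces the problem to itself in the critical range.

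The paper's proof uses a different, purely combinatorial input that your plan never invokes. For $a\le\tfrac12\sqrt{d}$ the quadratic form attached to a primitive ideal of norm $a$ is reduced (from $4a^{2}<4ac-b^{2}$ one gets $a<c$), so each ideal class contains at most one such ideal; hence $\sum_{a\le\frac12\sqrt{d}}\nu(a)\le h(-d)$. Combining this with the pointwise bound $\nu(a)\le 2^{w(a)}$ and with Watkins' computation guaranteeing $h(-d)\ge 101$ for $d>3\times 10^{8}$ (this is where the hypothesis on $d$ actually enters), the sum $\sum\nu(a)/a$ is maximized by packing the at most $h(-d)$ admissible ideals into the smallest norms, giving $\sum_{a\le\frac12\sqrt{d}}\nu(a)/a\le\sum_{n\le 34}2^{w(n)}/n+(h(-d)-101)/35\le h(-d)/11$. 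Note that at $h(-d)=101$ this packing bound is $9.161$ against $101/11\approx 9.18$, so the constant $11$ is nearly extremal for the available information; there is no large safety margin, and any proof must use the class-number lower bound (or an equivalent per-class rigidity statement), which is absent from your argument.
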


\begin{proof}
Notice first that for an ideal $[a, \frac{b+\sqrt{-d}}{2}]$ with norm $a\leq \tfrac{1}{2}\sqrt{d}$, the corresponding quadratic form $ax^2+bxy+cy^2$ is reduced. To see this, note that for $d>4$,  equality cannot hold for $a\leq \tfrac{1}{2}\sqrt{d}$ since, otherwise, $d/4$ would not be squarefree. So
\[
4a^2< d=4ac-b^2\leq 4ac,
\]
which yields $a< c.$ The above observation implies that each ideal class of $\mathbb{Q}(\sqrt{-d})$ contains at most one ideal of the form  $[a, \frac{b+\sqrt{-d}}{2}]$ with norm $a\leq \tfrac{1}{2}\sqrt{d}$, and in particular, we have  
\[
\sum_{a\leq \tfrac{1}{2}\sqrt{d}}\nu(a)\leq h(-d).
\]

On the other hand, using Lemma~\ref{lem:nu}, we can show that $\nu(a)\leq 2^{w(a)}$, where $w(n)$ denotes the number of distinct prime divisors of $n$, with $w(1)=0.$ Hence, we have
\[
\sum_{a\leq \tfrac{1}{2}\sqrt{d}}\frac{\nu(a)}{a}\leq \sum_{a\leq \tfrac{1}{2}\sqrt{d}}\frac{2^{w(a)}}{a}.
\]
One can simply verify with a calculator that 
$
\sum_{n=1}^{34}2^{w(n)}=101.
$
This implies that there can only be at most $101$ ideals of the form $[a, \frac{b+\sqrt{-d}}{2}]$ with norm $a$ less or equal to $34$. But since in our case, the class number $h(-d)$ is at least $101$, we may write
\[
\sum_{a\leq \tfrac{1}{2}\sqrt{d}}\frac{\nu(a)}{a}\leq  \sum_{n=1}^{34}\frac{2^{w(n)}}{n}+\frac{h(-d)-101}{35}.
\]
This is because the sum is larger if more small numbers $a$ are represented as norms of ideals. So in the above, $101$ ideals have norms from $1$ to $34$ and the norms of the rest must be at least $35.$ Hence, by evaluating the sum on the right in the above, we obtain
\[
\sum_{a\leq \tfrac{1}{2}\sqrt{d}}\frac{\nu(a)}{a}\leq 9.161+\frac{h(-d)-101}{35}\leq \frac{h(-d)}{11},
\]
for $h(-d)\geq 101$.
\end{proof}

\section{Proof of Theorem \ref{thm:main}}\label{sec-tmain}
The proof relies on estimates of sums of the form \eqref{eq:sum-norm} when $x$ is slightly larger than $\frac{1}{2}\sqrt{d}$. To make this precise, we consider an auxiliary function $f(d)\geq 1$, to be specified later. We set 
\[
x=\frac{1}{2}\sqrt{d}f(d).
\]  
From now on, we may assume that there exists $\beta>0$ such that $L(\beta,\chi)=0$ and that 
\begin{equation}\label{eq:beta_ass}
1-\beta\leq  \frac{6.5}{\sqrt{d}},
\end{equation}
for otherwise, there will be nothing to prove. Then, we define the integral
\begin{equation}\label{intI}
I := \frac{1}{2\pi i}\int_{2-i\infty}^{2+i \infty} \zeta_{-d}(s+\beta)\frac{x^s}{s(s+2)(s+3)}\, ds.
\end{equation}
As we can see in the next lemma, this integral allows us to estimate the sum of reciprocal norms of ideals that we mentioned in the previous section. 
\begin{lemma}\label{upperI}
We have
\[
I\leq \frac{1}{6}\ x^{1-\beta}\sum_{N(\ia)\leq x} \dfrac{1}{N(\ia)}.
\]
\end{lemma}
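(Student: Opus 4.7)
The plan is to convert the contour integral into a sum over ideals by Perron-type arguments. On the vertical line $\re(s)=2$ one has $\re(s+\beta)>1$, so the Dedekind zeta function admits its Dirichlet series
\[
\zeta_{-d}(s+\beta) \;=\; \sum_{\ia}\frac{1}{N(\ia)^{s+\beta}},
\]
converging absolutely there. Together with the $|s|^{-3}$ decay of $1/[s(s+2)(s+3)]$ on vertical lines, this justifies swapping sum and integral by Fubini, yielding
\[
I \;=\; \sum_{\ia}\frac{1}{N(\ia)^{\beta}}\, J\!\left(\frac{x}{N(\ia)}\right),\qquad
J(y) := \frac{1}{2\pi i}\int_{2-i\infty}^{2+i\infty}\frac{y^s}{s(s+2)(s+3)}\,ds.
\]

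The key step is to evaluate $J(y)$ exactly via contour shifts; the $|s|^{-3}$ decay means arcs can be discarded freely. For $y\leq 1$ I would shift to $\re(s)=+\infty$; since $y^{\re(s)}\to 0$ there and the integrand has no poles in the right half-plane, this gives $J(y)=0$. For $y\geq 1$ I would close to the left, picking up simple poles at $s=0,-2,-3$ with residues $\tfrac{1}{6}$, $-\tfrac{1}{2y^2}$, and $\tfrac{1}{3y^3}$ respectively; summing them gives
\[
J(y) \;=\; \frac{1}{6}-\frac{1}{2y^2}+\frac{1}{3y^3} \;=\; \frac{(y-1)^2(y+2)}{6y^3}.
\]
The right-hand factorization makes it manifest that $0\leq J(y) \leq \tfrac{1}{6}$ for every $y\geq 1$.

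Combining the two displays, only ideals $\ia$ with $N(\ia)\leq x$ contribute, and each contributes at most $\tfrac{1}{6}N(\ia)^{-\beta}$, so
\[
I \;\leq\; \frac{1}{6}\sum_{N(\ia)\leq x}\frac{1}{N(\ia)^{\beta}}.
\]
Writing $N(\ia)^{-\beta}=N(\ia)^{-1}\cdot N(\ia)^{1-\beta}$ and using the working hypothesis \eqref{eq:beta_ass}, which gives $0<1-\beta<1$, together with $N(\ia)\leq x$, yields $N(\ia)^{1-\beta}\leq x^{1-\beta}$ and therefore the claim.

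The only step that is not completely mechanical is the identity
\[
\frac{1}{6} - \frac{1}{2y^2}+\frac{1}{3y^3} = \frac{(y-1)^2(y+2)}{6y^3},
\]
since this is what makes $J$ non-negative and so makes truncating the sum to $N(\ia)\leq x$ legitimate as an \emph{upper} bound rather than just a formal equality. Everything else is a standard Mellin-inversion manipulation.
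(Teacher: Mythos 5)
Your proof is correct and follows essentially the same route as the paper: expand $\zeta_{-d}(s+\beta)$ as a Dirichlet series, interchange sum and integral, evaluate the kernel integral (the paper does this via partial fractions and Perron's formula, you via residues --- the same computation), bound it by $\tfrac16$ for $y\geq 1$, and finish with $N(\ia)^{1-\beta}\leq x^{1-\beta}$. The only cosmetic quibble is that your ``shift right'' justification ($y^{\re(s)}\to 0$) does not literally apply at $y=1$, but the kernel's $|s|^{-3}$ decay (or your explicit formula, which vanishes at $y=1$) covers that boundary case, and the nonnegativity of $J$ is a pleasant but unnecessary observation since the terms with $N(\ia)>x$ vanish exactly.
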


Before proving this lemma, let us first recall Perron's Formula, see \cite[p.243]{Tom} for example: if $y$ is any positive real number and $c>0$, then we have
\begin{equation}\label{eq:Per}
\frac{1}{2\pi i}\int_{c-i\infty}^{c+i \infty} \frac{y^s}{s} ds =
\begin{cases}
1 & \text{ if }y>1,\\
\frac{1}{2}& \textit{ if }y=1,\\
0 & \text{ if }0<y<1,
\end{cases}
\end{equation}
where by $\int_{c-i\infty}^{c+i \infty}$, we mean  $\underset{T\to \infty}{\textup{lim }}\ \int_{c-iT}^{c+i T}$.

\begin{proof}[Proof of Lemma~\ref{upperI}]
We begin by the following partial fraction decomposition: 
\begin{equation}\label{pc}
\frac{1}{s(s+2)(s+3)}=\frac{1}{6s}-\frac{1}{2(s+2)}+\frac{1}{3(s+3)}.
\end{equation}
Hence, by \eqref{eq:Per} we obtain 
\begin{equation}\label{eq:int_y}
\frac{1}{2\pi i}\int_{2-i\infty}^{2+i \infty}
\frac{y^s}{s(s+2)(s+3)}=
\begin{cases}
0 & \quad\text{ if }0<y<1,\\
\frac{1}{6} - \frac{y^{-2}}{2}+\frac{y^{-3}}{3} & \quad\text{ if }y\geq 1.
\end{cases}
\end{equation}
Since
\[
\zeta_{-d}(s)=\sum_{\ia} \dfrac{1}{N(\ia)^s}=\sum_{u^2a}\frac{\nu(a)}{u^{2s}a^s}
\]
converges absolutely for $\mathrm{Re(s)}>1$, we have 
\begin{align*}
I
& =  \frac{1}{2\pi i}\int_{2-i\infty}^{2+i\infty}
\sum_{\ia} \dfrac{1}{N(\ia)^{s+\beta}}\;\frac{x^s}{s(s+2)(s+3)}\ ds\\
& = \frac{1}{2\pi i}\int_{2-i\infty}^{2+i \infty}
\sum_{\ia} \dfrac{1}{N(\ia)^{\beta}}
\bigg(\frac{x}{N(\ia)}\bigg)^s\;\frac{1}{s(s+2)(s+3)}\ ds.
\end{align*}
Swapping summation and integration and  using \eqref{eq:int_y} (setting $y=\frac{x}{N(\ia)}$) yield
\begin{align*}
I 
& = \sum_{N(\ia)\leq x} \dfrac{1}{N(\ia)^{\beta}}
\bigg[\frac{1}{6} - \frac{N(\ia)^2}{2x^2}+\frac{N(\ia)^3}{3x^3}\bigg]\\
& \leq \frac{1}{6}\sum_{N(\ia)\leq x} \dfrac{1}{N(\ia)^{\beta}}\hspace*{1cm}\Big(\text{ since }\ \frac{1}{6}-\frac{1}{2y^2}+\frac{1}{3y^3}\leq \frac{1}{6} \ \text{ for any }\ y\geq 1\Big)\\
&\leq \frac{x^{1-\beta}}{6}\sum_{N(\ia)\leq x} \dfrac{1}{N(\ia)},
\end{align*}
which complete the proof of the lemma.
\end{proof}

\subsection{Lower bound on $I$}
By shifting the path of integration of the integral $I$ to $\mathrm{Re}(s)=-\beta$,  Equation \eqref{intI} can now be written as
\begin{equation}\label{splitintI}
I = \frac{L(1,\chi) x^{1-\beta}}{(1-\beta)(3-\beta)(4-\beta)} +\frac{1}{2\pi i}\int_{-\beta-i\infty}^{-\beta +i \infty} \zeta(s+\beta) L(s+\beta,\chi) \frac{x^s}{s(s+2)(s+3)} ds,
\end{equation}
where the first term on right-hand side comes from the simple pole of the integrand at $s=1-\beta$. Note that $s=0$ is also a singularity but it is removable since we assumed that $L(\beta,\chi)=0.$  Let us denote the integral on the right-hand side of \eqref{splitintI} by $J$, i.e., 
\[
J:=\frac{1}{2\pi i}\int_{-\beta-i\infty}^{-\beta+i\infty}\zeta(s+\beta)L(s+\beta,\chi)\frac{x^s}{s(s+2)(s+3)}\, ds.
\]
Then, one has
\[
|J|\leq \frac{x^{-\beta}}{2 \pi}\int_{-\infty}^{\infty}\frac{|\zeta(it)||L(it,\chi)|}{\sqrt{(\beta^2+t^2)((2-\beta)^2+t^2)((3-\beta)^2+t^2)}}\, dt.
\]

On the other hand, using our assumptions \eqref{eq:beta_ass} and $d> 300000000$, we deduce that 
\[
\beta^2> \left(1-6.5/\sqrt{300000000}\right)^2> 0.9996^2>0.999.
\]
Therefore,
\begin{equation}\label{eq:int_J}
|J|< \frac{x^{-\beta}}{2 \pi}\int_{-\infty}^{\infty}\frac{|\zeta(it)||L(it,\chi)|}{\sqrt{(0.999+t^2)(1+t^2)(4+t^2)}}\, dt.
\end{equation}

In order to find an upper bound for the above integral we need to obtain explicit bounds for  $|\zeta(it)|$ and $|L(it,\chi)|$.  The following explicit result can be found in \cite{BoundZ}: for $|t|\geq 3$, 
\begin{equation}\label{eq:bound_zeta}
|\zeta(1+it)|\leq \frac{3}{4}\log |t|.
\end{equation}
Similarly, for $|L(it,\chi)|$, Dudek in \cite{BoundL} obtained 
\begin{equation}\label{eq:bound_L}
|L(1+it,\chi)|\leq \log d +\log(e(|t|+14/5)).
\end{equation}
Using \eqref{eq:bound_zeta}, \eqref{eq:bound_L}, and the functional equations for the respective functions, we obtain the following lemma.
\begin{lemma}\label{lem:zeta_L}
For any real number $t$ such that $|t|\geq 3$, we have 
\begin{align*}
|\zeta(it)|& \leq \frac{3}{\sqrt{32\pi}}\sqrt{|t|}\log |t|, \ \text{ and }\\[.5em]
|L(it,\chi)| &\leq 0.4\ \sqrt{d|t|} \left(\log d +\log(e(|t|+14/5))\right).
\end{align*} 
\end{lemma}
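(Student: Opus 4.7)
The plan is to invoke the functional equations for $\zeta$ and $L(s,\chi)$ to transfer the bounds \eqref{eq:bound_zeta}–\eqref{eq:bound_L} from the line $\re(s)=1$ down to the line $\re(s)=0$. The price paid by the functional equation is a ratio of $\Gamma$-factors, which on the imaginary axis can be evaluated explicitly by the reflection formula.

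First I would apply the classical functional equation
\[
\zeta(s) = 2^{s}\pi^{s-1}\sin(\pi s/2)\,\Gamma(1-s)\,\zeta(1-s)
\]
at $s=it$. Taking absolute values and using $|\sin(\pi it/2)|=\sinh(\pi|t|/2)$ together with the identity $|\Gamma(1-it)|^{2}=|\Gamma(1+it)|^{2}=\pi|t|/\sinh(\pi|t|)$, one obtains after simplification
\[
|\zeta(it)| \;=\; \pi^{-1/2}\sqrt{|t|}\sqrt{\tfrac{\tanh(\pi|t|/2)}{2}}\,|\zeta(1-it)|.
\]
Since $\tanh(\pi|t|/2)<1$ and $|\zeta(1-it)|=|\zeta(1+it)|$, plugging in \eqref{eq:bound_zeta} for $|t|\geq 3$ yields $|\zeta(it)|\leq \tfrac{3}{4\sqrt{2\pi}}\sqrt{|t|}\log|t|$, which is exactly $\tfrac{3}{\sqrt{32\pi}}\sqrt{|t|}\log|t|$.

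For $L(s,\chi)$ the same strategy applies, but with the completed $L$-function for an odd primitive character (recall that for a fundamental discriminant $-d$ with $d\geq 3$ the character $\chi$ is odd):
\[
\bigl(d/\pi\bigr)^{(s+1)/2}\Gamma\!\left(\tfrac{s+1}{2}\right)L(s,\chi) \;=\; W(\chi)\bigl(d/\pi\bigr)^{(2-s)/2}\Gamma\!\left(\tfrac{2-s}{2}\right)L(1-s,\chi),
\]
with $|W(\chi)|=1$. Evaluating at $s=it$ and taking absolute values gives
\[
|L(it,\chi)| \;=\; \sqrt{d/\pi}\,\frac{|\Gamma(1-it/2)|}{|\Gamma(1/2+it/2)|}\,|L(1+it,\chi)|.
\]
Using $|\Gamma(1/2+it/2)|^{2}=\pi/\cosh(\pi t/2)$ and $|\Gamma(1-it/2)|^{2}=\pi(|t|/2)/\sinh(\pi|t|/2)$, the ratio squared simplifies to $|t|/\bigl(2\tanh(\pi|t|/2)\bigr)$. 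Hence
\[
|L(it,\chi)| \;\leq\; \sqrt{\tfrac{d|t|}{2\pi\tanh(\pi|t|/2)}}\,|L(1+it,\chi)|.
\]
The constant in front can be bounded by using $|t|\geq 3$, so $\tanh(\pi|t|/2)\geq \tanh(3\pi/2)$; a direct numerical check gives $(2\pi\tanh(3\pi/2))^{-1/2}<0.4$. Combining with \eqref{eq:bound_L} and $|L(1-it,\chi)|=|L(1+it,\chi)|$ yields the claimed bound.

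The steps are essentially routine once the correct form of the functional equation is fixed. The main care points are (i) choosing the right parity parameter in the $\Gamma$-factor of the $L$-function (the character is odd because $-d$ is a negative fundamental discriminant with $d\geq 3$), and (ii) keeping the numerical constants tight enough so that the replacement of $\tanh(\pi|t|/2)$ by its lower bound at $|t|=3$ still produces the clean constants $3/\sqrt{32\pi}$ and $0.4$. No other deep input is required beyond the Stirling-style reflection identities for $|\Gamma|$ on the critical and boundary lines.
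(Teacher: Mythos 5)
Your argument is correct and follows essentially the same route as the paper: apply the functional equations at $s=it$, evaluate the resulting $\Gamma$-ratios via the reflection identities $|\Gamma(1+iy)|^2=\pi y/\sinh(\pi y)$ and $|\Gamma(1/2+iy)|^2=\pi/\cosh(\pi y)$, bound $\tanh(\pi|t|/2)$ and $\coth(\pi|t|/2)$ for $|t|\geq 3$, and then invoke \eqref{eq:bound_zeta} and \eqref{eq:bound_L}. The constants $3/\sqrt{32\pi}$ and $0.4$ come out exactly as in the paper's computation.
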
 
\begin{proof}
The functional equation of the Riemann zeta function gives
\[
|\zeta(it)|=\pi^{-1}\sinh(\pi|t|/2)|\Gamma(1-it)||\zeta(1-it)|.
\]
Since 
\begin{align*}
\sinh(\pi|t|/2)|\Gamma(1-it)|& =\sqrt{\frac{\pi}{2}|t|\tanh(\pi|t|/2)}\\
& =\sqrt{\frac{\pi}{2}|t|\left(1-\frac{2}{e^{\pi|t|}+1}\right)}\\
& \leq \sqrt{\frac{\pi}{2}|t|},
\end{align*}
we deduce from \eqref{eq:bound_zeta} that for $|t|\geq 3$ we have
\begin{align*}
|\zeta(it)|\leq \frac{3}{\sqrt{32\pi}}\sqrt{|t|}\log |t|.
\end{align*} 

Similarly the functional equation for $L(s,\chi)$ is as follows: if
\[
\Lambda(s,\chi)=\left(\frac{\pi}{d}\right)^{-(s+1)/2}\Gamma\left(\frac{s+1}{2}\right)L(s,\chi),
\]
then 
\[
\Lambda(1-s,\chi)=\frac{ik^{1/2}}{\tau(\chi)}\Lambda(s,\chi),
\]
where $\tau(\chi)=\sum_{k=1}^{d}\chi(k)\exp(2\pi i k/d)$ (here $\chi$ is real and $\chi(-1)=-1$). Using the fact that $|\tau(\chi)|=d^{1/2}$ and replacing $s$ by $it$  yield
\[
\left(\frac{\pi}{d}\right)^{-1} \left|\Gamma\left(\frac{2-it}{2}\right)\right||L(1-it,\chi)|=\left(\frac{\pi}{d}\right)^{-1/2} \left|\Gamma\left(\frac{1+it}{2}\right)\right||L(it,\chi)|.
\]
Hence
\[
|L(it,\chi)|=\left(\frac{d}{\pi}\right)^{1/2} \left|\Gamma\left(\frac{2-it}{2}\right)\right|\left|\Gamma\left(\frac{1+it}{2}\right)\right|^{-1}|L(1-it,\chi)|.
\]
Moreover, we have 
\[
\left|\Gamma\left(\frac{2-it}{2}\right)\right|=\sqrt{\frac{\pi|t|}{2\sinh(\pi |t|/2)}} \ \ \text{ and } \ \ \left|\Gamma\left(\frac{1+it}{2}\right)\right|=\sqrt{\frac{\pi}{\cosh(\pi |t|/2)}}.
\]
Thus, 
\[
\left|\Gamma\left(\frac{2-it}{2}\right)\right|\left|\Gamma\left(\frac{1+it}{2}\right)\right|^{-1}
 =\sqrt{\frac{|t|}{2}\coth(\pi|t|/2)}.
\]
Therefore, we deduce that
\begin{equation}
|L(it,\chi)|=\left(\frac{d}{\pi}\right)^{1/2} \sqrt{\tfrac{1}{2}|t|\coth(\pi|t|/2)} \, |L(1-it,\chi)|.
\end{equation}
If  $|t|\geq 3$, then $e^{\pi|t|}\geq e^{3\pi}> 12391$, so
\[
\sqrt{\tfrac{1}{2}|t|\coth(\pi|t|/2)} =\sqrt{\tfrac{|t|}{2}\left(1+\tfrac{2}{e^{\pi|t|}-1}\right)}  < \sqrt{\tfrac{1}{2}|t|\left(1+\tfrac{2}{12390}\right)}<0.708 \sqrt{|t|}.
\]
Thus for $|t|\geq 3$, we have 
\[
|L(it,\chi)|< \frac{0.708}{\sqrt{\pi}}\sqrt{d|t|}\, |L(1-it,\chi)|< 0.4 \sqrt{d|t|}\, |L(1-it,\chi)|.
\]
The proof of the lemma is complete by using \eqref{eq:bound_L} to estimate the right-hand side.
\end{proof}
Another consequence of the calculations in the proof above is that we also have 
\begin{equation}\label{eq:zetaL}
|\zeta(it)L(it,\chi)|=\frac{\sqrt{d}}{2\pi}|t\zeta(1-it)L(1-it,\chi)| \ \ \text{ for }\ \ t\in\mathbb{R}\setminus\{0\}.
\end{equation}
In view of \eqref{eq:int_J}, we consider the following integrals: 
\begin{align*}
J_1& :=\frac{1}{2\pi}\int_{-3}^{3}\frac{|t\zeta(1-it)|}{\sqrt{(0.999+t^2)((1+t^2)(4+t^2)}}\, dt, \\[1em]
J_2 & :=\frac{1}{2\pi} \int_{-3}^{3}\frac{|t\zeta(1-it)|\log(e(|t|+14/5))}{\sqrt{(0.999+t^2)((1+t^2)(4+t^2)}}\, dt, \\[1em]
J_3& :=\frac{0.6}{\sqrt{2\pi}}\int_3^{\infty}\frac{t\log t}{\sqrt{(0.999+t^2)((1+t^2)(4+t^2)}}\, dt, \\[1em]
J_4& :=\frac{0.6}{\sqrt{2\pi}}\int_3^{\infty}\frac{t\log t\log(e( t+14/5))}{\sqrt{(0.999+t^2)((1+t^2)(4+t^2)}}\, dt.
\end{align*}
By \eqref{eq:int_J}, \eqref{eq:zetaL} and Lemma~\ref{lem:zeta_L}, we have
\begin{equation}\label{eq:J2}
|J|\leq \frac{x^{-\beta}}{2\pi}\sqrt{d}\Big((J_1+J_3)\log d+J_2+J_4\Big).
\end{equation}

On the other hand, we can use a computer algebra system such as SageMath or Mathematica to calculate the $J_i$'s numerically. We obtained the following numerical values  (with high accuracy)  
\begin{align*}
J_1 &= 0.19692\ldots, \\
J_2 & = 0.45203\ldots, \\
J_3 & = 0.15661\ldots, \\
J_4 & =0.61360\ldots
\end{align*}
Rounding this values up at the $3$rd digit, and using \eqref{eq:J2}, we have 
\[
|J|\leq \frac{x^{-\beta}}{2\pi}\sqrt{d}\Big(0.354 \log d+1.067\Big) <\frac{x^{-\beta}}{2\pi}\Big(0.354+\frac{1.067}{\log d}\Big) \sqrt{d}\ \log d.
\]
Thus using $d\geq 300000000$ to estimate the term in brackets, we deduce that
\begin{equation}\label{estimateJ}
|J|< 0.066\ x^{-\beta}  \sqrt{d}\ \log d.
\end{equation}

Returning to the integral $I$. Recall from Equation \eqref{splitintI} that we have 
\[
I=\frac{L(1,\chi)x^{1-\beta}}{(1-\beta)(3-\beta)(4-\beta)}+J.
\]
Hence, using the estimate \eqref{estimateJ} for $J$ that we just achieved, we get
\[
I\geq \frac{x^{1-\beta}}{(1-\beta)}\left(\frac{L(1,\chi)}{(3-\beta)(4-\beta)}-0.066\ (1-\beta)\frac{\sqrt{d}\log d}{x}\right).
\] 
Since $x=\frac{1}{2}\sqrt{d}f(d)$, we deduce that
\[
I\geq \frac{x^{1-\beta}}{(1-\beta)}\left(\frac{L(1,\chi)}{(3-\beta)(4-\beta)}-0.132\ (1-\beta)\frac{\log d}{f(d)}\right).
\] 
In addition, by the class number formula for $d>4$, we have $L(1,\chi)=\frac{\pi h(-d)}{\sqrt{d}}$. So we finally get a lower estimate of $I$
\begin{equation}\label{eq:lowerI}
I\geq \frac{x^{1-\beta} }{(1-\beta)\sqrt{d}}\left(\frac{\pi h(-d)}{(3-\beta)(4-\beta)}-0.132\frac{(1-\beta)\sqrt{d}\log d}{f(d)}\right).
\end{equation}

\subsection{Upper bound on I}
We recall the bound from  Lemma \ref{upperI},  
\[
I \leq \frac{1}{6}\ x^{1-\beta}\sum_{N(\fa)\leq x} N(\fa)^{-1}.
\]
where $x=\tfrac{1}{2}\sqrt{d}f(d)$ and $f(d)\geq 1$ a function of $d$ to be chosen later.  Here we aim to estimate the sum on the right-hand side. For this, we choose another auxiliary function $\ell(d)$ that satisfies $f(d)\leq \ell(d).$ We have 
\begin{equation}\label{eq:N_pi_sqr}
\sum_{N(\fa)\leq x} N(\fa)^{-1}=\sum_{u^2a\leq x}\frac{\nu(a)}{u^2a}\leq \frac{\pi^2}{6}\sum_{a\leq x} \frac{\nu(a)}{a}.
\end{equation}
We split the last sum  into two parts
\[
\sum_{a\leq x} \frac{\nu(a)}{a}=\sum_{a\leq  \frac{1}{2}\sqrt{d}} \frac{\nu(a)}{a}+\sum_{\frac{1}{2}\sqrt{d}< a\leq x} \frac{\nu(a)}{a}=:S_0+S_1.
\] 
Furthermore, we also split $S_1$, 
\[
S_1=\sum\nolimits'\frac{\nu(a)}{a}+\sum\nolimits''\frac{\nu(a)}{a},
\]
where $\sum\nolimits'\frac{\nu(a)}{a}$ denotes the sum over all $a$ with $\frac{1}{2}\sqrt{d}< a\leq x$ such that $a$ has a prime divisor $p^{\alpha} > \ell(d).$ Hence, Lemma~\ref{lem:nu} yields 
\begin{equation}\label{eq:sum_prime}
\sum\nolimits'\frac{\nu(a)}{a}=\sum\nolimits'\frac{\nu(p^\alpha b)}{p^\alpha b}\leq \sum_{b< x/\ell(d)}\frac{\nu(b)}{b}\sum_{\Delta(b)< p^{\alpha}\leq  x/b} (2p^{-\alpha}),
\end{equation}
where
\[
\Delta(b):=\max\left\lbrace \ell(d), \tfrac{1}{2b}\sqrt{d}\right\rbrace.
\]
Recalling our notation from Section~\ref{sec-pre} that 
\[
\sum_{p^{\alpha}\leq y}p^{-\alpha}=\log\log y +B_2+\varepsilon(y).
\] 
So
\begin{align*}\sum_{\Delta(b)< p^{\alpha}\leq  x/b} (2p^{-\alpha})
& = 2\log\left(\frac{\log( x/b)}{\log\Delta(b)}\right)+2\Big(\varepsilon( x/b)-\varepsilon(\Delta(b))\Big)\\
& = 2\log\left(\frac{\log(f(d))+\log(\frac{1}{2b}\sqrt{d})}{\log\Delta(b)}\right)+2\Big(\varepsilon( x/b)-\varepsilon(\Delta(b))\Big)\\
&\leq 2\log\left(1+\frac{\log f(d)}{\log\Delta(b)}\right)+2\Big(\max_{ y\geq \ell(d)}\varepsilon(y)-\min_{y\geq \ell(d)} \varepsilon(y)\Big)\\
&\leq 2\log\left(1+\frac{\log f(d)}{\log \ell(d)}\right)+2\Big(\max_{ y\geq \ell(d)}\varepsilon(y)-\min_{y\geq \ell(d)} \varepsilon(y)\Big).
\end{align*}
Moreover, since $f(d)\leq \ell(d)$, we have $x/\ell(d)\leq \frac{1}{2}\sqrt{d}$. Thus
\[
\sum_{b< x/\ell(d)}\frac{\nu(b)}{b}\leq S_0.
\]
Therefore, we deduce from \eqref{eq:sum_prime}, that 
\begin{equation}
\sum\nolimits'\frac{\nu(a)}{a}\leq S_0\left(2\log\Big(1+\frac{\log f(d)}{\log \ell(d)}\Big)+2\Big(\max_{ y\geq \ell(d)}\varepsilon(y)-\min_{y\geq \ell(d)} \varepsilon(y)\Big)\right).
\end{equation}

As for the sum $\sum\nolimits''\frac{\nu(a)}{a}$, each positive integer $a$ contributing to this sum has no prime power divisor $>\ell(d)$. So the number of distinct prime divisors of such an $a$ is at least
\[
k_0:=\left\lceil \frac{\log(\frac{1}{2}\sqrt{d})}{\log \ell(d)}\right\rceil.
\]
The latter and the multiplicative property of the function $\nu(a)$ imply that 
\begin{align*}
\sum\nolimits''\frac{\nu(a)}{a}
& \leq \sum_{k\geq k_0}\frac{1}{k!}\left(\sum_{p^{\alpha}\leq \ell(d)} \frac{\nu(p^{\alpha})}{p^{\alpha}}\right)^k\\[.5em]
&\leq \frac{\sigma^{k_0}}{k_0!}\left(1+\frac{\sigma}{(k_0+1)}+\frac{\sigma^2}{(k_0+1)(k_0+2)}+\cdots\right),
\end{align*}
where
\[
\sigma:=\sum_{p^{\alpha}\leq \ell(d)} \frac{2}{p^{\alpha}}.
\]
If we choose $\ell(d)$ in such a way $k_0+1> \sigma$, then 
\[
1+\frac{\sigma}{(k_0+1)}+\frac{\sigma^2}{(k_0+1)(k_0+2)}+\cdots\leq 1+\frac{\sigma}{(k_0+1)}+\frac{\sigma^2}{(k_0+1)^2}+\cdots=\frac{1+k_0}{1+k_0-\sigma}.
\]
Hence, we obtain 
\begin{equation}
\sum\nolimits''\frac{\nu(a)}{a}\leq \frac{(1+k_0)\ \sigma^{k_0}}{(1+k_0-\sigma)\ k_0!}.
\end{equation}
Putting everything together, and using the result in Lemma~\ref{lem:h} that $S_0\leq \frac{h(-d)}{11}$, we arrive at the following estimate 
\begin{equation}\label{eq:N_up}
\sum_{N(\fa)\leq x} N(\fa)^{-1}\leq \frac{\pi^2}{66}h(-d)\left(1+2\log\Big(1+\frac{\log f(d)}{\log \ell(d)}\Big)+\mathrm{Er}(d,\ell(d))\right),
\end{equation}
where
\[
\mathrm{Er}(d,\ell(d)):=2\Big(\max_{ y\geq \ell(d)}\varepsilon(y)-\min_{y\geq \ell(d)} \varepsilon(y)\Big)+\frac{11\ (1+k_0)\ \sigma^{k_0}}{(1+k_0-\sigma)h(-d) \ k_0!}.
\]
This expression is not easy work with so let us derive a simpler bound for it. From Proposition \ref{prop}, we know that 
\[
\max_{ y\geq \ell(d)}\varepsilon(y)-\min_{y\geq \ell(d)} \varepsilon(y)\leq \frac{1.75}{(\log \ell(d))^2}+\min \left\lbrace\frac{0.2}{(\log \ell(d))^3},\ 10^{-4}\right\rbrace.
\]
If $\frac{0.2}{(\log \ell(d))^3}<10^{-4}$, then $\log \ell(d)>12$, so
\[
\frac{1.75}{(\log \ell(d))^2}+\frac{0.2}{(\log \ell(d))^3}\leq \frac{1}{(\log \ell(d))^2}\left(1.75+\frac{0.2}{\log \ell(d)}\right)<\frac{1.8}{(\log \ell(d))^2}.
\]
On the other hand if $\frac{0.2}{(\log \ell(d))^3}\geq 10^{-4}$, then $\log \ell(d)<13$, so 
\[
\frac{1.75}{(\log \ell(d))^2}+10^{-4}\leq \frac{1}{(\log \ell(d))^2}\left(1.75+\frac{(\log \ell(d))^2}{10000}\right)<\frac{1.8}{(\log \ell(d))^2}.
\]
Therefore, we get 
\begin{equation}
2\Big(\max_{ y\geq \ell(d)}\varepsilon(y)-\min_{y\geq \ell(d)} \varepsilon(y)\Big)< \frac{3.6}{(\log \ell(d))^2}.
\end{equation}
On the other hand, by Stirling's formula, we have 
\[
k_0!\geq \sqrt{2\pi k_0}\left(\frac{k_0}{e}\right)^{k_0}.
\]
Which implies that 
\[
\frac{\sigma^{k_0}}{k_0!}\leq \frac{1}{\sqrt{2\pi k_0}}\left(\frac{e\sigma}{k_0}\right)^{k_0}.
\]
All these together with $h(-d)\geq 101$, we obtain 
\begin{equation}\label{eq:Err}
E(d,\ell(d)) \leq \frac{3.6}{(\log \ell(d))^2} + \frac{11\ (1+k_0)}{101(1+k_0-\sigma)}\frac{1}{\sqrt{2\pi k_0}}\left(\frac{e\sigma}{k_0}\right)^{k_0}.
\end{equation}
\subsection{Final steps}
The purpose here is to choose suitable values for $f(d)$ and $\ell(d)$, but before we do that, let us first list all the constraints (on $f(d)$ and $\ell(d)$) that we assumed earlier. For  $d> 300000000$, we require
\begin{itemize}
\item  $f(d)\geq 1$,  
\item $f(d)\leq \ell(d)$, and
\item $k_0+1> \sigma$ (both sides of the inequality depend on $\ell(d)$).
\end{itemize}

\subsection*{Case $\log(d)\leq 42$}
We choose $f(d)=1$ ($\ell(d)$ will not be needed here), then \eqref{eq:N_pi_sqr}, Lemma~\ref{lem:h} and Lemma~\ref{upperI}  yield 
\[
I \leq \frac{1}{6}\ x^{1-\beta}\sum_{N(\fa)\leq \frac{1}{2}\sqrt{d}} N(\fa)^{-1}\leq \frac{\pi^2}{396} x^{1-\beta}h(-d).
\]
This and the lower bound of $I$ in \eqref{eq:lowerI} imply 
\[
\frac{1}{(1-\beta)\sqrt{d}} \left(\frac{\pi }{(3-\beta)(4-\beta)}-0.132\frac{(1-\beta)\sqrt{d}\log d}{h(-d)}\right)\leq \frac{\pi^2}{396}.
\]
Recall our assumption in \eqref{eq:beta_ass} that  $1-\beta\leq \frac{6.5}{\sqrt{d}}$. Since $d> 300000000$, the latter implies that $\beta>0.999,$ so
\[
\frac{\pi}{(3-\beta)(4-\beta)}> 0.523.
\]
Using $h(-d)\geq 101$, and the assumption \eqref{eq:beta_ass} again (to estimate the term $(1-\beta)\sqrt{d}$ inside the brackets above), we obtain 
\begin{align*}
(1-\beta)\sqrt{d}
& > \frac{396}{\pi^2}\left(0.523-0.132\frac{6.5\log d}{101}\right)\\
& > 20.984-0.341 \log d.
\end{align*}
The latter is greater than $6.6$ (if $\log(d)\leq 42$), contradicting \eqref{eq:beta_ass}. 
\subsection*{Case $42<\log(d)\leq 100$}
Here, we choose $f(d)=\ell(d)=16.$ The combination \eqref{eq:lowerI}, \eqref{eq:N_up} and Lemma~\ref{upperI} gives
\begin{align*}
\frac{1}{(1-\beta)\sqrt{d}}& \left(\frac{\pi }{(3-\beta)(4-\beta)}-0.132\frac{(1-\beta)\sqrt{d}\log d}{101f(d)}\right)\\
& \leq \frac{\pi^2}{396}\left(1+2\log\Big(1+\frac{\log f(d)}{\log \ell(d)}\Big)+\mathrm{Er}(d,\ell(d))\right).
\end{align*}  
This implies that  
\[
(1-\beta)\sqrt{d}> \frac{20.984-0.341\frac{\log d}{f(d)}}{1+2\log\Big(1+\frac{\log f(d)}{\log \ell(d)}\Big)+\mathrm{Er}(d,\ell(d))}.
\]
The numerator is obtained in the same way as in the previous case. Since $\ell(d)=16$, we have
\[
\sigma= 2\sum_{p^{\alpha}\leq 16}p^{-\alpha}<3.786. 
\]
Moreover, from \eqref{eq:Err},  we get
\begin{align*}
E(d,\ell(d)) &\leq \frac{3.6}{(\log 16)^2} + \frac{11\ (1+k_0)}{101(1+k_0-3.786)}\frac{1}{\sqrt{2\pi k_0}}\left(\frac{10.3}{k_0}\right)^{k_0} \\
& < 0.469+0.044 \frac{(1+k_0)}{(1+k_0-3.786)}\frac{1}{\sqrt{ k_0}}\left(\frac{10.3}{k_0}\right)^{k_0}. 
\end{align*}
Similarly, since $f(d)=\ell(d)=16$,   
\[
20.984-0.341\frac{\log d}{f(d)} \geq 20.984-0.022 \log d,
\]
and 
\[
1+2\log\Big(1+\frac{\log f(d)}{\log \ell(d)}\Big)  =1+2\log(2)<2.387.  
\]
Thus, we finally obtain 
\begin{equation}\label{eq:case2_lb}
(1-\beta)\sqrt{d}> \frac{20.984-0.022 \log d}{2.856+0.044 \frac{(1+k_0)}{(1+k_0-3.786)}\frac{1}{\sqrt{ k_0}}\left(\frac{10.3}{k_0}\right)^{k_0}}
\end{equation}
where, here 
\[
k_0=\left\lceil \frac{\log d-\log 4}{2\log 16 }\right\rceil.
\]
The right-hand side of \eqref{eq:case2_lb} is still difficult to estimate manually, so we did this numerically and the result is shown in Figure~\ref{Fig2}. The corners in the graph correspond to the points where the of value $k_0$ changes from an integer to the next. The minimum occurs in the first corner where $k_0$ changes from $8$ to $9$ i.e. when $\log d$ is close to $16\log 16-\log  4 \approx 45.747$. At this point we still have $(1-\beta)\sqrt{d}>6.53$ (when $k_0\geq  11$ the corners become less apparent because the second term in the denominator contributes very little). So, it is clear that we also obtain $(1-\beta)\sqrt{d}>6.5$ for all $d$ such that $42<\log(d)\leq 100$, which contradicts \eqref{eq:beta_ass}.  
\begin{figure}[h]
\centering
\includegraphics[scale=.6]{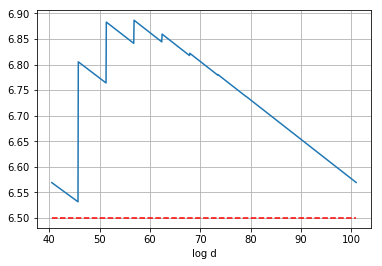}
\caption{Numerical plot for the case $42<\log(d)\leq 100$}\label{Fig2}
\centering
\end{figure}
\subsection*{Case $\log d> 100$}
Just as in the previous case, we also have the bound 
\begin{equation}\label{eq:case3_lb}
(1-\beta)\sqrt{d}> \frac{20.984-0.341\frac{\log d}{f(d)}}{1+2\log\Big(1+\frac{\log f(d)}{\log \ell(d)}\Big)+\mathrm{Er}(d,\ell(d))},
\end{equation} 
but we choose $f(d)=\ell(d)=0.5\ \log(\frac{1}{2}\sqrt{d}),$ which we simply abbreviate as$~t$ to make the reading easier. So from here, we will write everything in terms of $t$. The condition $\log d> 100$ implies that $t>24.65.$ Let us begin by estimating the terms in the denominator of \eqref{eq:case3_lb}. We have 
\[
k_0=\left\lceil\frac{2t}{\log t}\right\rceil \geq \frac{2t}{\log t},
\]
and since the right-hand side is at least $15.3$, we may assume that $k_0\geq 16.$ By Proposition~\ref{prop}, we have 
\[
\sigma =2\sum_{p^{\alpha}\leq t}p^{-\alpha}\leq 2\log\log t + 2.07.
\]
Thus, we deduce that 
\[
\frac{e\sigma}{k_0}\leq \frac{e \log t \ (2 \log\log t + 2.07)}{2t}
\]
It is easy to verify that the term on the right-hand side is a decreasing function of $t$ when $t>24.65.$ Hence, we get
\[
\frac{e\sigma}{k_0}< \frac{e \log (24.65) \ (2 \log\log (24.65) + 2.07)}{49.3}<0.778.
\]
In particular, we verified that $1+k_0>\sigma.$ We also have 
\[
\frac{1+k_0}{1+k_0-\sigma}= \frac{1}{1-\frac{\sigma}{1+k_0}}<\frac{1}{1-\frac{0.778}{e}}<1.401.
\]
Using these numerical estimates and the fact that $k_0\geq 16$, we obtain 
\begin{equation}\label{eq:case3_b1}
\frac{11\ (1+k_0)}{101(1+k_0-\sigma)}\frac{1}{\sqrt{2\pi k_0}}\left(\frac{e\sigma}{k_0}\right)^{k_0}<0.016 \ (0.778)^{16}<0.0003.
\end{equation}
We can see that the contribution from this term is very small. Let us now look the remaining terms in the denominator of the right-hand side of \eqref{eq:case3_lb}. Since we have chosen $f(d)=\ell(d)=t$, we have
\begin{equation}\label{eq:case3_b2}
1+2\log\Big(1+\frac{\log f(d)}{\log \ell(d)}\Big)+\frac{3.6}{(\log \ell(d))^2}=1+2\log 2 +\frac{3.6}{(\log t)^2}<2.737.
\end{equation}
The numerical value is obtain by rounding up the value at $t=24.65.$ Combining \eqref{eq:case3_b1} and  \eqref{eq:case3_b2}, we finally get 
\[
(1-\beta)\sqrt{d}>\frac{20.984-0.341\frac{\log d}{t}}{2.738}>7.663-0.125\frac{4t+\log 4}{t}>7
\]
for $t> 24.65.$  Once again, we obtain a constant strictly greater than $6.5$ which bounds $(1-\beta)\sqrt{d}$ from below. Since we have shown that this is the case for all possible values of $\log d\geq \log(300000000)$ the proof of Theorem~\ref{thm:main} is complete.
\section{Proof of Theorem~\ref{thm:t2}}\label{sec-t2}
We begin by the following consequence Theorem 1 in  Goldfeld-Schinzel~\cite{Gold} :  if $\beta$ exists, then
\begin{equation}\label{eq:GS_as}
1-\beta \geq  \left(\frac{6}{\pi^2}+o(1)\right)\frac{L(1,\chi)}{\displaystyle \sum_{a\leq \frac{1}{4}\sqrt{d}}\tfrac{\nu(a)}{a}} \, \, \text{ as }\ \ d\to \infty.
\end{equation}
For now, we need know how to estimate sums of the form  $\sum_{a\leq x}\frac{\nu(a)}{a}$. We start with the following observation which we already used in the proof Lemma~\ref{lem:h}: for any $1\leq x\leq \frac{1}{2}\sqrt{d} $, we have 
\begin{equation}
\sum_{a\leq x}\frac{\nu(a)}{a} \leq \sum_{a\leq y}\frac{2^{w(a)}}{a} \ \ \text{whenever }\ \ \sum_{a\leq y}2^{w(a)}\geq h(-d).
\end{equation}
The next lemma gives asymptotic estimates of the sums involving $2^{w(a)}$. 
\begin{lemma}\label{asym2omega}
As $y\to\infty$, we have
\[
\sum_{n\leq y}2^{w(n)}=\frac{6}{\pi^2}\ y\log y + O(y) \ \ \text{ and }\ \ 
\sum_{n\leq y}\frac{2^{w(n)}}{n}=\frac{3}{\pi^2} (\log y)^2 + O(\log y). 
\]
\end{lemma}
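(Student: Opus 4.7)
The plan is to exploit the multiplicative identity $2^{w(n)} = \sum_{d \mid n} \mu^2(d)$ (which holds because both sides equal $2$ on every prime power) together with the classical estimates for squarefree integers
$$\sum_{d \le y} \mu^2(d) = \frac{6}{\pi^2} y + O(\sqrt{y}), \qquad \sum_{d \le y} \frac{\mu^2(d)}{d} = \frac{6}{\pi^2} \log y + O(1),$$
both of which follow from the Dirichlet series $\sum_n \mu^2(n) n^{-s} = \zeta(s)/\zeta(2s)$ by a single application of the hyperbola method.

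For the first assertion, I would swap the order of summation and substitute the two estimates above:
$$\sum_{n \le y} 2^{w(n)} = \sum_{d \le y} \mu^2(d) \left\lfloor \frac{y}{d} \right\rfloor = y \sum_{d \le y} \frac{\mu^2(d)}{d} + O\!\left(\sum_{d \le y} \mu^2(d)\right) = \frac{6}{\pi^2} y \log y + O(y).$$

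For the second assertion, the cleanest route is partial summation from the first estimate. Writing $A(t) := \sum_{n \le t} 2^{w(n)} = \frac{6}{\pi^2} t \log t + O(t)$, one has
$$\sum_{n \le y} \frac{2^{w(n)}}{n} = \frac{A(y)}{y} + \int_1^y \frac{A(t)}{t^2}\, dt,$$
and the dominant contribution $\int_1^y \frac{6 \log t}{\pi^2 t}\, dt = \frac{3}{\pi^2} (\log y)^2$ yields the claimed estimate, with the remaining pieces absorbed into $O(\log y)$. Both asymptotics are classical and the argument is essentially bookkeeping of error terms; the only mild care needed is to verify that the $O$-terms genuinely collapse to $O(y)$ and $O(\log y)$ respectively. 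There is no conceptual obstacle. (An equivalent route, avoiding partial summation, is to apply the identity directly to the harmonic-weighted sum: $\sum_{n \le y} 2^{w(n)}/n = \sum_{d \le y} (\mu^2(d)/d) \sum_{m \le y/d} 1/m$, split $\log(y/d) = \log y - \log d$, and invoke the auxiliary estimate $\sum_{d \le y} \mu^2(d) \log d / d = \frac{3}{\pi^2} (\log y)^2 + O(\log y)$ obtained by a further partial summation.)
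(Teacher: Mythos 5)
Your argument is correct and follows essentially the same route as the paper: the identity $2^{w(n)}=\sum_{d\mid n}\mu^2(d)$, interchange of summation combined with the squarefree counting estimate to get the first asymptotic, and then Abel/partial summation to deduce the second. The only cosmetic difference is that you quote $\sum_{d\le y}\mu^2(d)/d=\tfrac{6}{\pi^2}\log y+O(1)$ directly, while the paper derives it by one extra partial summation from $\sum_{n\le y}|\mu(n)|=\tfrac{6}{\pi^2}y+O(\sqrt{y})$.
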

\begin{proof}
For each $n\geq 1$, the number $2^{w(n)}$ is equal to the number of squarefree divisors of $n$, i.e.,   
\[
2^{w(n)}=\sum_{d|n} |\mu(d)|.
\] 
Therefore,
\begin{align}
\sum_{n\leq y}2^{w(n)} 
& = \sum_{n\leq y} \sum_{d|n} |\mu(d)| \nonumber \\
& = \sum_{d\leq y}|\mu(d)|\sum_{q\leq \frac{y}{d}} 1 \nonumber \\
& = y \sum_{d\leq y}\frac{|\mu(d)|}{d} +O(y). \label{eq:sum2^w}
\end{align}
To estimate the sum in the last line, we use a well known estimate for the counting function of squarefree integers
\begin{equation}\label{countSI}
\sum_{n\leq y}|\mu(n)|=\frac{6}{\pi^2}\ y +O(\sqrt{y}).
\end{equation}
This is not too difficult to prove, we can even find a proof with an explicit error term in \cite{Mobius}. Hence, applying Abel's identity, we have 
\begin{equation}\label{abel}
\sum_{n\leq y}\frac{|\mu(n)|}{n}=\frac{1}{y}\sum_{n\leq y}|\mu(n)|+\int_1^y \frac{1}{t^2}\left(\sum_{n\leq t}|\mu(n)|\right)\ dt.
\end{equation}
The first term is obviously bounded, and the second can be estimated using \eqref{countSI}. Thus 
\begin{equation}\label{2omega}
\sum_{n\leq y}\frac{|\mu(n)|}{n}= \frac{6}{\pi^2} \, \log y +O(1).
\end{equation}
The estimate of $\sum_{n\leq y}2^{w(n)}$ in the lemma now follows from \eqref{eq:sum2^w}. For the second estimate in the lemma, we use Abel's identity again
\[
\sum_{n\leq y}\frac{2^{w(n)}}{n}=\frac{1}{y}\sum_{n\leq y} 2^{w(n)}+\int_1^y \frac{1}{t^2}\left(\sum_{n\leq t} 2^{w(n)}\right)\ dt.
\]
Then, we use the first estimate in the lemma, that we just proved, to estimate both terms on the right-hand side, and we obtain
\[
\sum_{n\leq y}\frac{2^{w(n)}}{n}=\frac{3}{\pi^2} (\log y)^2+O(\log y),
\]
which completes the proof of the lemma.
\end{proof}
One can find explicit upper bounds of the sums in Lemma~\ref{asym2omega}  in \cite[Lemma 12]{Trudgian}. Lower bounds can also be achieved using the same proof provided in that paper. We are now ready to prove the asymptotic formula in Theorem~\ref{thm:t2}. 
\begin{proof}[Proof of Theorem~\ref{thm:t2}]
We choose a positive number $y = y(d)$ in such a way that 
\[
\sum_{a\leq y-1}2^{w(a)}< h(-d)\leq \sum_{a\leq y}2^{w(a)},
\]
then, by the first estimate in Lemma \ref{asym2omega}, we have $h(-d)= \frac{6}{\pi^2}\  y(\log y)+O(y)$. Thus, writing $y$ in terms of $h(-d)$, we obtain
\begin{equation}\label{eq:y_h}
y= \left(\frac{\pi^2}{6}+O\left(\frac{1}{\log h(-d)}\right)\right) \frac{h(-d)}{\log h(-d)}.
\end{equation}
Similarly, using the second estimate in Lemma \ref{asym2omega}, we have
\begin{align*}
\sum_{a\leq \frac{1}{4}\sqrt{d}}\frac{\nu(a)}{a} \leq \sum_{a\leq y}\frac{2^{w(a)}}{a} = \frac{3}{\pi^2} (\log y)^2+O(\log y).
\end{align*}
Then, once again, expressing the right-hand side in terms of $h(-d)$ using \eqref{eq:y_h} yields
\[
\sum_{a\leq \frac{1}{4}\sqrt{d}}\frac{\nu(a)}{a} \leq \left(\frac{3}{\pi^2}+O\left(\frac{\log\log h(-d)}{\log h(-d)}\right)\right) (\log h(-d))^2.
\]
Plugging this into \eqref{eq:GS_as} completes the proof.
\end{proof}

\section{Concluding remarks}\label{sec-conc}
About further improvements of Theorem \ref{thm:main},  one might be able to push the constant $6.5$ to about $7$ by carefully choosing the values of $f(d)$ and $\ell(d)$. Another idea is to replace the term  $s(s+2)(s+3)$ in the definition of the integral $I$ in \eqref{intI} with $s(s+a)(s+b)$, then choose  $a$ and $b$ that give the best result. We have tried this and found out that $s(s+2)(s+3)$ is already very close to optimal. Replacing it will either make an insignificant improvement on the final result or worsen it.  What could really make a difference is any improvement of the bound in Lemma \ref{lem:h}, with $h(-d)\geq 101$ we could only get the factor $11$. We do not know if one could do significantly better than that. 

\bibliography{biblio}

\Addresses
\end{document}